\documentclass[a4paper,reqno,11pt]{amsart}
\usepackage[T1]{fontenc}
\usepackage[utf8]{inputenc}
\usepackage[english]{babel}

\usepackage{amssymb, amsmath, amsthm, graphicx, color}
\usepackage[inline]{enumitem}
\usepackage[dvipsnames,svgnames,table]{xcolor}
\usepackage{comment}

\makeatletter
\def\thm@space@setup{
\thm@preskip=4mm
\thm@postskip=0mm
}
\makeatother

\usepackage{hyperref}
\hypersetup{
    pdftitle={Cops and robber in graphs with bounded vertex cover number},
    pdfauthor={},
    colorlinks,
    linkcolor={RoyalBlue},
    citecolor={RubineRed},
    urlcolor={blue!80!black}
}

\usepackage{cleveref}
\usepackage{mathtools}
\usepackage{thmtools, thm-restate}
\usepackage[longnamesfirst,numbers,sort&compress]{natbib}

\theoremstyle{plain}
\newtheorem{theorem}{Theorem}
\newtheorem{lemma}[theorem]{Lemma}

\newtheorem{observation}[theorem]{Observation}
\newtheorem{claim}[theorem]{Claim}
\newtheorem{problem}[theorem]{Problem}
\newcommand{\vomit}[1]{}
\newcommand{\tw}{\operatorname{tw}}

\newcommand{\cop}{\operatorname{cop}}

\newcommand{\Oh}{\mathcal{O}}

\DeclarePairedDelimiter\set{\{}{\}}

\newcommand{\E}{\mathbb{E}}
\renewcommand{\Pr}{\mathbb{P}}

\let\le\leqslant
\let\ge\geqslant
\let\leq\leqslant
\let\geq\geqslant

\let\subset\subseteq

\let\epsilon\varepsilon

\DeclareMathOperator\vc{vc}
\DeclareMathOperator\dist{dist}

\title[Cops and robber in graphs with bounded vertex cover number]{Cops and robber in graphs \\ with bounded vertex cover number}

\begin{document}

\author[P.~Bose]{Prosenjit Bose}
\address[P.~Bose]{School of Computer Science, Carleton University, Ottawa, Canada}
\email{jit@scs.carleton.ca}

\author[L.~Esperet]{Louis Esperet}
\address[L.~Esperet]{G-SCOP, CNRS, Université Grenoble Alpes, Grenoble, France}
\email{louis.esperet@grenoble-inp.fr}

\author[J.~Hodor]{Jędrzej Hodor}
\address[J.~Hodor]{Theoretical Computer Science Department, 
Faculty of Mathematics and Computer Science and  Doctoral School of Exact and Natural Sciences, Jagiellonian University, Krak\'ow, Poland}
\email{jedrzej.hodor@gmail.com}

\author[G.~Joret]{Gwena\"el Joret}
\address[G.~Joret]{D\'epartement d'Informatique, Universit\'e libre de Bruxelles, Belgium}
\email{gwenael.joret@ulb.be}

\author[P.~Micek]{Piotr Micek}
\address[P.~Micek]{Department of Theoretical Computer Science, Jagiellonian University, Kraków, Poland}
\email{piotr.micek@uj.edu.pl}

\author[C.~Rambaud]{Clément Rambaud}
\address[C.~Rambaud]{Universit\'e C\^ote d'Azur, CNRS, Inria, I3S, Sophia Antipolis, France}
\email{clement.rambaud@inria.fr}

\thanks{P.~Bose is supported in part by NSERC. L.\ Esperet is supported by the French ANR Projects
  TWIN-WIDTH
  (ANR-21-CE48-0014-01) and MIMETIQUE (ANR-25-CE48-4089-01), and by LabEx
  PERSYVAL-lab (ANR-11-LABX-0025). J.\ Hodor is supported by a Polish Ministry of Education and Science grant (Perły Nauki; PN/01/0265/2022). G.\ Joret is supported by the Belgian National Fund for Scientific Research (FNRS). P.\ Micek is supported by the National Science Center of Poland under grant UMO-2023/05/Y/ST6/00079 within the WEAVE-UNISONO program}

\begin{abstract}
Meyniel's conjecture states that $n$-vertex connected graphs have cop number $\Oh(\sqrt{n})$. The current best known upper bound is $n/2^{(1-o(1))\sqrt{\log n}}$, proved independently by Lu and Peng (2011), and by Scott and Sudakov (2011). In this paper, we extend their result by showing that every connected graph with vertex cover number $k$ has cop number at most $k/2^{(1-o(1))\sqrt{\log k}}$. This is the first sublinear upper bound on the cop number in terms of the vertex cover number. 
\end{abstract}

\maketitle

\section{Introduction}

In the \emph{cops and robber game} introduced by Quilliot \cite{Qui78}, Nowakowski and  Winkler \cite{NW83}, and later in its current form by Aigner and Fromme \cite{Aigner84}, cops are trying to catch a robber in a graph $G$. All cops and the robber are located on the vertices of $G$, and can move in turns to neighboring vertices. Initially (at step $0$), the cops choose their locations in $G$ (with possibly several cops on the same vertex), and then the robber chooses their location. At each subsequent step $i\ge 1$, each cop can either stay on the same vertex or move to an adjacent vertex, and after the cops' moves the robber can either stay on the same vertex or move to an adjacent vertex. The cops win the game if they can eventually catch the robber, i.e., at least one cop is located on the same vertex as the robber at some time during the game. 
Otherwise, the robber can avoid the cops forever and in this case the robber wins the game.
The \emph{cop number} of a graph $G$, denoted by $\cop(G)$, is the minimum number of cops required to win the game on $G$. 

The cops and robber game has been studied extensively, see e.g.\ the book by~\citet{BN11}. 
The most famous open problem in the area is Meyniel's conjecture from 1985, which states that $\cop(G)=\Oh(\sqrt{n})$ for every $n$-vertex connected graph $G$. See a survey on this conjecture by~\citet{survey}. The best known upper bound on the cop number of an $n$-vertex connected graph $G$ in terms of $n$ is $\cop(G)\le n/2^{(1-o(1))\sqrt{\log n}}$, obtained independently by \citet{Lu-Peng}, and by \citet{Scott-Sudakov} in 2011. (Throughout this paper, $\log$ stands for the logarithm in $2$, and the natural logarithm is
denoted by $\ln$.) 

Let us now turn our attention to classes of graphs closed under taking minors. 
\citet{Aigner84} showed with a beautifully simple argument that connected planar graphs have cop number at most $3$. A constant upper bound on the cop number holds in the more general setting of connected graphs excluding a fixed graph as a minor, as shown by \citet{And86}. See also~\cite{KenterEtAl} for recent results in that direction. 

The cop number of connected graphs of genus $g$ is at most $(4g+10)/3$, as proved by \citet{genus}.
Similarly, the cop number of graphs of treewidth $k$ is at most $k/2 + 1$ as proved by \citet{Joret2008}.
However, it is not known whether these two bounds are asymptotically tight, the cop number could be sublinear in these parameters. In fact, for graphs of genus $g$, it is conjectured~\cite{BM20} to be closer to $\Oh(\sqrt{g})$, and  
similarly, a $\Oh(\sqrt{k})$ bound might hold for graphs of treewidth $k$ for all we know, as in Meyniel's conjecture.  
This motivates the following research problem: Is it possible to get a sublinear upper bound on the cop number of a connected graph $G$ in terms of a natural structural parameter of $G$ that is smaller than its number of vertices? In this paper, we make a first step in this direction.

A \emph{vertex cover} in a graph $G$ is a subset $S$ of vertices of $G$ such that every edge of $G$ is incident to a vertex of $S$.  
The \emph{vertex cover number} of $G$, denoted by $\vc(G)$, is the minimum size of a vertex cover in $G$. If one cop is located on each vertex of a vertex cover of a connected graph $G$, then when cops are moving for the first time, the robber is either already on a vertex with a cop, or is on a vertex adjacent to a vertex with a cop. This shows that $\cop(G)\le \vc(G)$ for every connected graph $G$. 
\citet{Gahlawat2023} recently improved this bound to $\cop(G) \leq \tfrac13\,\vc(G) + 1$.

Our main contribution is a sublinear upper bound on the cop number in terms of the vertex cover number, obtained by adapting the proof methods of Lu and Peng~\cite{Lu-Peng} and of Scott and Sudakov~\cite{Scott-Sudakov}. 

\begin{theorem}\label{thm:vc}
    For every connected graph $G$, we have
        \[\cop(G) \leq \frac{\vc(G)}{2^{(1-o(1))\sqrt{\log \vc(G)}}}.\]
\end{theorem}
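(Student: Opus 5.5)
We will adapt the Lu--Peng / Scott--Sudakov strategy, replacing the vertex set by a minimum vertex cover $S$ with $|S|=k=\vc(G)$. The independent set $I=V(G)\setminus S$ behaves like a collection of "leaves": every vertex of $I$ has all its neighbours in $S$. So the robber is on a vertex of $S$ at least every other step, and any robber walk alternates between $S$ and $I$ at worst. The plan is to measure distances and expansion only through $S$. Concretely, define an auxiliary graph $H$ on $S$ in which $u,v\in S$ are adjacent if $\dist_G(u,v)\le 2$. Then $H$ is connected. Moreover, a robber in $G$ projects to a "robber" on $H$ that moves at speed one in $H$ per two steps of $G$ (or per step with an extra slack constant). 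We will take care that catching the projected position in $S$ with a cop at distance at most $1$ suffices.

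The core of the argument, following Scott--Sudakov, is a greedy/probabilistic guarding scheme. We fix a parameter $r\approx \sqrt{\log k}$. Then we iteratively find, in the current "territory" of the robber (a subset of $S$ together with the vertices of $I$ whose neighbourhoods lie in it), either a small set of cops that can guard a shortest path or a ball and shrink the territory, or else use expansion. The key lemma will be stated purely in terms of $S$. Let $X\subseteq S$ be the set of $S$-vertices the robber can still use. If every $H$-ball of radius $r$ around some $x\in X$ contains few vertices of $X$, then placing cops randomly on $S$ with density about $p=2^{-\sqrt{\log k}}$ catches the robber with high probability. To see this, the cops at a vertex $v\in S$ (after travelling at most $r$ steps in $G$) move onto the robber's reachable set via a matching argument (Hall's theorem) applied to neighbourhoods $N^{r}(Y)\cap S$. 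Only vertices of $S$ are counted, so the number of cops is $pk\,\mathrm{polylog}$ rather than $pn$. Otherwise some small $H$-ball is dense, and we use the classical fact that one cop can guard a geodesic (Aigner--Fromme). We then iterate, reducing $|X|$ by a factor, which gives the recursion $f(k)\le k/2^{r}+ \ldots$ optimised at $r\sim\sqrt{\log k}$.

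The main obstacle is the expansion/Hall step. In the original proof one needs $|N^{i}(Y)|\ge$ large for all subsets $Y$ of the robber's reachable region. Here vertices of $I$ can be numerous and create large neighbourhoods that do not help, because cops placed only on $S$ must reach them. Our approach is to restrict everything to $S$. The robber at a vertex $u\in I$ is adjacent only to $S$-vertices, so it suffices for the cops to occupy, at the right time, all of $N(u)\subseteq S$, or to occupy the $S$-vertex the robber must pass through. We first attempt to prove a version of the Lu--Peng lemma in which the ``ball'' $B_i(v)$ is replaced by $B_i(v)\cap S$. We must verify that the robber's possible positions at time $i$ intersected with $S$ still satisfy the required counting. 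This uses that from $I$ the robber must step into $S$ within one move, so a robber at time $t$ is in $S$ at time $t$ or $t+1$. Handling this parity issue, for example by doubling the time horizon, and ensuring the guarding of geodesics still costs $O(1)$ cops per removed portion of $S$ is where we expect the technical work.
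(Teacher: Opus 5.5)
There is a genuine gap, and it sits in the dense branch of your dichotomy. The rest of your framework is the right one: count only cover vertices, bound the diameter by guarding geodesics, place random cops on $S$ and use Hall's theorem, and trap the robber on a vertex of $I$ whose neighbours are all occupied. But when some small ball is dense, you propose guarding a geodesic with one cop. A geodesic inside a ball of radius $s$ has at most $2s+1$ vertices, so this removes only $\Oh(s)$ vertices of $S$ per cop. A final count of about $|S|/k$ cops requires removing about $k$ vertices of $S$ per cop.

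In the Frankl/Scott--Sudakov setting the dense case is a vertex of large degree, neutralised by one cop sitting on it. In your graph $H$ the analogue is a vertex $v$ with $B_G(v,2)\cap S$ large, and protecting such a set cannot in general be done by one cop in $G$. Cops in $G$ do not simulate cops in $H$, and a cop at distance $1$ from the robber does not catch it.

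Nor can this case be avoided. One cop on $v$ does handle $B_G(v,1)\cap S$, but small closed neighbourhoods do not make $B_G(v,2)\cap S$ small. For example, take $v\in S$ adjacent to many $y_j\in I$, each $y_j$ adjacent to its own $x_j\in S$. Then the robber's initial territory in $S$ is huge and need not expand, so it stays in the territory instead of being covered by the random cops. The closing count $|S|\le(\alpha k)^{t+1}|A_1|$ needs the initial territory $A_1$ (a constant-radius ball around the robber's start, intersected with $S$) to have size $k^{\Oh(1)}$, so it gives no contradiction.

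The missing ingredient is the paper's rotating-patrol lemma (Lemma~\ref{lemma:our_main_reduction}). If every ball of radius $s-1$ meets $X$ in at most $K$ vertices, then $2sK$ cops protect $B_G(v,s)\cap X$. At every step $i$, a fresh patrol of $K$ cops is sent from $v$ to the targets $B_G(v,s)\cap B_G(r_i,s-1)\cap X$, where $r_i$ is the robber's current position, and the $2s$ patrols are recycled cyclically.

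Now take $(v,s)$ with $s$ minimal such that $|B_G(v,s)\cap X|\ge s!\,2^{s-1}k^s$. Protecting this ball costs $s!\,2^{s-1}k^{s-1}$ cops, i.e.\ one cop per $k$ removed vertices of $X$, which is exactly what the induction on $|X|$ tolerates. When no such pair exists, every ball satisfies $|B_G(v,s)\cap X|<(2ks)^s$. That is precisely the polynomial bound on $|A_1|=|B_G(r_0,2d+1)\cap X|$ needed for the expansion argument to close. Note that this argument uses $t=\lceil\log(2dk)\rceil$ scales of radii $2^i+d$, not a single radius $r$.

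Without this step, or some substitute that removes dense balls at a cost of one cop per $k$ vertices of $S$, your argument does not give a sublinear bound.
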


Observe that for a graph $G$, a set $X \subset V(G)$ is a vertex cover if and only if every connected component of $G - X$ is a single vertex.
Jain, Kalyanasundaram, and Tammana~\cite{Jain24} studied the cop number when the above condition is relaxed, namely, when we allow each connected component to have at most two vertices.
They showed that in this case, $\cop(G) \leq |X| \slash 3 + 4$.
The technique that we use to prove~\Cref{thm:vc} allows us to obtain again a sublinear bound on the cop number as a function of $|X|$ in this setting as well. We indeed prove the following more general result, where we only need to assume that the diameter and cop number of all connected components of $G-X$ are bounded. 
For a subset $S$ of vertices of $G$, we define the \emph{diameter of $S$ in $G$} 
to be the maximum distance between two vertices from $S$ in $G$. 

\begin{theorem}\label{thm:technical}
    Let $G$ be a connected graph, let $d\ge 2$, $x\ge 2$ and $p\ge 1$ be integers, and let $X \subset V(G)$ with $|X| \leq x$ be such that for each connected component $C$ of $G - X$, the diameter of $V(C)$ in $G$ is less than $d$ and the cop number of $C$ is at most $p$.
    Then,
        \[\cop(G)\le p+{x}\cdot{2^{-\sqrt{\log x-\Oh((\sqrt{\log x}+\log d)(\log \log x +d))}}}.\] 
        In particular, if for each connected component $C$ of $G-X$, the diameter of $V(C)$ in $G$ is $o(\sqrt{\log x})$ and the cop number of $C$ is at most $x\cdot 2^{-(1-o(1))\sqrt{\log x}}$, then we have $\cop(G)\le x\cdot 2^{-(1-o(1))\sqrt{\log x}}$.
\end{theorem}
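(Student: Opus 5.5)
We adapt the Lu--Peng/Scott--Sudakov strategy, replacing the vertex set by $X$ and treating each component of $G-X$ as a single ``super-vertex'' of bounded diameter. The game is played in two phases. In the first phase, a team of roughly $x\cdot 2^{-\sqrt{\log x}+\dots}$ cops moves the robber's reachable territory into one component $C$ of $G-X$ from which the robber can never again reach $X$ without being caught. In the second phase, $p$ additional cops enter $C$ and play their winning strategy there. Throughout, the cops guarding $X$ keep every vertex of $N(C)\cap X$ under control.

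\textbf{Auxiliary graph.} We contract each component of $G-X$ and keep track of distances between vertices of $X$. Since every component has diameter less than $d$ in $G$, two vertices of $X$ whose neighbourhoods meet a common component are at distance at most $d$ (more precisely, at most $d+1$). So the robber's movement inside $G-X$ is short-range with respect to $X$. The key notion is a vertex $v\in X$ being \emph{guarded}: some cop can reach $v$ no later than the robber can. We need two ingredients.

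\begin{enumerate}[label=(\roman*)]
\item \textbf{Guarding a shortest path.} We use Aigner--Fromme: one cop, after a setup period, guards a shortest path. We apply this inside $G$ but only count the vertices of $X$ on that path.
\item \textbf{Expansion step.} Let $B_r(v)$ denote the vertices of $X$ within distance $r$ of $v$, with $r$ ranging over multiples of roughly $d$. Either some ball of radius about $\sqrt{\log x}\cdot d$ around the robber contains few vertices of $X$, or balls grow by a factor of $2^{\sqrt{\log x}}$ per step.
\end{enumerate}

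\textbf{Main phase.} We follow the Scott--Sudakov random-placement scheme. Cops are placed randomly on $X$ with density $2^{-\sqrt{\log x}}$. When the robber sits at distance $r$ from the boundary of a region, a Hall-type matching argument (in the bipartite graph between cops and $X$-vertices within distance $r$) lets the cops occupy every vertex of $X$ near the robber before he arrives. The robber cannot hide in $G-X$ to avoid this, because every component has diameter less than $d$, so he is always within $d$ of $X$. This is where the $\log d$ and $d$ terms in the exponent come from: radii are measured in multiples of $d$, and the union bounds over $\log\log x$ scales of $r$ pick up the factor $(\sqrt{\log x}+\log d)(\log\log x+d)$.

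Iterating over $\sqrt{\log x}$ levels, as in Lu--Peng, shrinks the set of $X$-vertices the robber can safely reach. Eventually every vertex of $X$ adjacent to the robber's component is occupied or guarded. The robber is then confined to a single component $C$, and the $p$ extra cops walk into $C$ and catch him, since $\cop(C)\le p$ and guarding $N(C)\cap X$ prevents escape. The ``in particular'' statement follows by plugging in $d=o(\sqrt{\log x})$: then $(\sqrt{\log x}+\log d)(\log\log x+d)=o(\log x)$.

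\textbf{Main obstacle.} The hardest step is the matching/expansion lemma. There the robber moves through vertices outside $X$ that are not covered by the random cop placement, so distances must be defined via $X$ and the diameter bound $d$ must be used to show that a cop reaching the right $X$-vertex in time suffices. I would first try to prove a lemma of the form: if the cops occupy all of $X\cap B_{d}(u)$ for the robber's position $u$, then the robber's next $d$ moves stay inside a region whose $X$-boundary is controlled. I would then run the standard Lu--Peng counting on the $X$-vertices only.
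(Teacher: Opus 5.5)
There is a genuine gap. Your outline is the Scott--Sudakov random-placement scheme with $V(G)$ replaced by $X$, but it leaves out the one ingredient that does not transfer, and the argument breaks exactly there.

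In that scheme the first snapshot at which anything is enforced is at time about $d$. The cops must travel roughly $d$ further than the robber, to absorb his freedom inside a component of $G-X$. So the initial frontier is a whole ball: in the paper it is $A_1=B_G(r_0,2d+1)\cap X$. The final contradiction $x\le|B_G(A_{t+1},2^t+d)\cap X|\le(\alpha k)^{t+1}|A_1|$ needs $|A_1|$ to be much smaller than $x$.

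When $X=V(G)$ this is automatic once closed neighbourhoods are small, since maximum degree $<k$ gives $|B_G(v,s)|\le k^s$. For general $X$, even a vertex cover, it is not automatic. The conditions $|B_G(w,1)\cap X|<k$ for all $w$, and few vertices of $X$ on every geodesic, say nothing about $|B_G(v,2)\cap X|$. Take $v\in X$ adjacent to many $u_j\notin X$, each $u_j$ adjacent to its own $w_j\in X$.

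Your step (ii) only states a growth dichotomy and gives no strategy in the fast-growth branch. Random placement plus Hall cannot help there, because the cops have no time to reach those vertices. Also, Hall only occupies the expanding part $D_i$ of the frontier. The remainder $A_i$ is controlled solely by counting from $|A_1|$, so it is not true that ``every vertex of $X$ near the robber'' gets occupied.

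What the paper adds is a deterministic rotating-patrol lemma. If $|B_G(w,s-1)\cap X|\le K$ for every $w$, then $B_G(v,s)\cap X$ can be protected by $2sK$ cops. These form $2s$ squads of $K$ cops based at $v$. The squad leaving at step $i$ goes to $B_G(v,s)\cap B_G(r_i,s-1)\cap X$, where $r_i$ is the robber's position, and arrives no later than the robber can.

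This lemma is used inside an induction on $|X|$ proving that $X$ can be protected by $|X|/k+f(k,d)$ cops. Your sketch lacks this framework too, so your use of Aigner--Fromme is never quantified. The induction runs as follows:
\begin{itemize}
\item A geodesic or closed neighbourhood containing $k$ vertices of $X$ is protected by one cop and removed. Since every $d+1$ consecutive vertices of a geodesic meet $X$, this also bounds the diameter by $2dk$.
\item If some $|B_G(v,s)\cap X|\ge s!\,2^{s-1}k^s$, take $s$ minimal. Then all $(s-1)$-balls contain fewer than $(s-1)!\,2^{s-2}k^{s-1}$ vertices of $X$. So the patrol costs $s!\,2^{s-1}k^{s-1}$ cops, which is one cop per $k$ removed vertices of $X$.
\item Otherwise $|A_1|\le(8kd)^{4d}$, and the random-placement argument with $t=\lceil\log(2dk)\rceil$ rounds closes.
\end{itemize}

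This also corrects your error-term bookkeeping. There are $t\approx\sqrt{\log x}+\log d$ scales, not $\log\log x$. The $\log\log x$ comes from $\alpha=32t\log(2tx)=O(\log^2x)$. The additive $d$ in $(\log\log x+d)$ comes precisely from $\log|A_1|\le 4d\log(8kd)$, that is, from the ball bound your argument does not provide.
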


\section{A key reduction}

Let $G$ be a connected graph.  
We say $t$ cops {\em can protect} a subset $X$ of vertices of $G$ if there is a constant $c$ and a strategy for these cops such that, irrespective of the initial positions of the cops, 
if the robber occupies a vertex in $X$ at some step $i$ of the game with $i\geq c$, then it is caught eventually by one of the cops. 

Observe that if a subset of the cops protects a subset $U$ of vertices in a connected graph $G$, then after a finite number of steps the robber is stuck forever in a fixed connected component of $G-U$. The remaining cops (not protecting $U$) can now easily reach this component and effectively start a new game in this component. In particular, the following observation is true.  

\begin{observation}\label{obs:removing_a_protected_set}
    Let $G$ be a connected graph,
    let $k,p$ be positive integers,
    and let $U,W \subseteq V(G)$. 
    Suppose that 
    \begin{enumerate}
        \item $k$ cops can protect $U$ in $G$, and
        \item for every connected component $C$ of $G-U$, the set $V(C) \cap W$ can be protected by $p$ cops in $C$.
    \end{enumerate}
    Then $k+p$ cops can protect $W$ in $G$.
\end{observation}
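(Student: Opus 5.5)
We combine the two strategies in sequence: first run the $k$ cops that protect $U$, wait until the robber is confined to one component of $G-U$, and then send in $p$ further cops to play the protecting strategy for $W$ inside that component.

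\textbf{Setting up the $U$-team.} Let $\mathcal{S}_U$ be the strategy of the $k$ cops protecting $U$, with constant $c_U$. These $k$ cops follow $\mathcal{S}_U$ from step $0$, whatever their initial positions. For every step $i \ge c_U$, if the robber stands on a vertex of $U$ at step $i$, it is eventually caught. So we may assume the robber never enters $U$ after step $c_U$. Since the robber moves along edges, from step $c_U$ on it stays in a single connected component $C$ of $G-U$. Note that this $C$ is determined by the robber's position at step $c_U$, which the cops observe.

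\textbf{Setting up the $p$-team.} For each component $C$ of $G-U$, let $\mathcal{S}_C$ be a strategy of $p$ cops protecting $V(C)\cap W$ in $C$, with constant $c_C$. Let $D$ be the diameter of $G$, which is finite since $G$ is connected. The $p$ extra cops start anywhere. At step $c_U$ they learn $C$. During the next at most $D$ steps each of them walks in $G$ to some vertex of $C$, and then waits there until all $p$ cops are inside $C$. From some step $s \le c_U + D$ on, they play $\mathcal{S}_C$ restricted to $C$. Since $\mathcal{S}_C$ works from arbitrary initial positions in $C$, this is legitimate, and it only uses edges of $C$, so the cops never leave $C$.

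\textbf{Checking the protection property.} The catch is that the constant must be uniform. There may be infinitely many components in general, but for finite graphs there are finitely many, so we set
\[c = c_U + D + \max_C c_C.\]
Suppose the robber is on a vertex $w \in W$ at some step $i \ge c$. If $w\in U$, it is caught by the $U$-team. Otherwise we may assume the robber was never on $U$ from step $c_U$ on, so it has stayed in $C$ since step $c_U$. Hence $w\in V(C)\cap W$. The robber's play within $C$ from step $s$ on is a legal robber play in the game on $C$, and $i - s \ge c_C$. So by the guarantee of $\mathcal{S}_C$, the robber is eventually caught by the $p$-team.

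The main subtlety is this uniformity of the constant, together with the fact that the $p$ cops must not be hindered by the robber while entering $C$. Cops do not block each other, so they can always travel along shortest paths. Hence the only real point is to ensure that "caught eventually" in the sub-game on $C$ transfers to $G$. It does, because the robber confined to $C$ has exactly the same moves as in $C$, and the cops in $C$ never need to leave. Altogether, $k+p$ cops protect $W$ in $G$.
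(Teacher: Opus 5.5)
Your proposal is correct and follows essentially the same argument as the paper. In both, a $k$-cop team runs the $U$-protecting strategy, so after its constant the robber is either eventually caught or confined to one component $C$ of $G-U$, and the other $p$ cops then enter $C$ and run the strategy protecting $V(C)\cap W$. Your version is simply more explicit than the paper's about the travel time, which is bounded by the diameter, and about taking a single constant $c_U + D + \max_C c_C$ over the finitely many components.
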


\begin{proof}
We are going to describe a strategy for $k+p$ cops to protect $W$ in $G$. 
The cops may start from an arbitrary vertex. 
We split them into two sets $C_K$ of size $k$ and $C_P$ of size $p$. 
By assumption, there exists a time $t$ such
that if the robber occupies a vertex in $U$ after step $t$, they are caught.
Consequently, after step $t$, the robber is confined to a single connected
component $C$ of $G-U$.

Once the robber is confined to $C$, the cops in $C_P$ move to $V(C) \cap W$ which by definition can be protected by $p$ cops. If the robber visits a vertex
in $W \cap V(C)$, they are caught by $C_P$. If the robber moves to a
vertex in $W \setminus V(C)$, they must enter $U$ and are caught by $C_K$. Thus,
$W$ is protected in $G$. 
\end{proof}

A \emph{geodesic} in a graph $G$ is a shortest path between two vertices of $G$. A classical result of Aigner and Fromme~\cite{Aigner84} is that a single cop can protect a geodesic.
\begin{lemma}[\citet{Aigner84}]\label{lemma:geodesic_protected}
    Let $G$ be a graph and let $P$ be a geodesic in $G$. 
    Then one cop can protect  $V(P)$ in $G$.    
\end{lemma}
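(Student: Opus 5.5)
The plan is the classical ``shadow'' strategy of Aigner and Fromme. Write $P=v_0v_1\cdots v_m$, so that $\dist(v_0,v_i)=i$ and $\dist(v_i,v_j)=|i-j|$ for all $i,j$, since $P$ is a geodesic. We may assume $G$ is connected: otherwise the cop plays in the component containing $P$, and a robber outside that component never reaches $V(P)$. For a robber position $r$, define its \emph{shadow} as $\sigma(r)=v_j$ with $j=\min(\dist(v_0,r),m)$.

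Two facts drive everything.

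\textbf{Lipschitz property.} If $r r'$ is an edge or $r=r'$, then $|\dist(v_0,r)-\dist(v_0,r')|\le 1$. Since truncation at $m$ is $1$-Lipschitz, the index of the shadow changes by at most one per robber move. Hence a cop standing on $\sigma(r)$ can move to $\sigma(r')$ in one step.

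\textbf{Domination property.} For every $i$ we have $\dist(\sigma(r),v_i)\le \dist(r,v_i)$. If $j=\dist(v_0,r)\le m$, the triangle inequality gives $\dist(r,v_i)\ge|\dist(v_0,r)-\dist(v_0,v_i)|=|j-i|$. If $\dist(v_0,r)>m$, then $\dist(r,v_i)\ge \dist(r,v_0)-i>m-i$. In particular, if $r=v_i\in V(P)$, then $\sigma(r)=v_i$.

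\textbf{The protection phase.} Suppose that after some cop move the cop stands on $\sigma(r)$, where $r$ is the robber's current vertex. The robber moves to $r'$, and the cop answers by moving to $\sigma(r')$, which is possible by the Lipschitz property. This invariant is therefore maintained forever. If at any point the robber stands on a vertex $v_i$ of $P$, then $\sigma(v_i)=v_i$, so the cop's answering move lands on the robber and it is caught.

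\textbf{The catching-up phase.} This is the main obstacle, because the strategy must work from an arbitrary initial position and the constant $c$ must not depend on it. First the cop walks to some vertex of $P$, which takes at most $|V(G)|$ steps. Then it walks along $P$ toward the current shadow index $s$, and jumps directly onto the shadow as soon as the two indices differ by at most one. To see that this terminates, suppose the cop is at index $a<s$. In each round the cop's index increases by one while $s$ changes by at most one, so the gap $s-a$ never increases. It stays the same only when $s$ increases, and $s\le m$, so the gap decreases at least once every $m+1$ rounds. The case $a>s$ is symmetric, with $s\ge 0$. Since the gap is at most $m$, the cop reaches the shadow within $\Oh(m^2)$ further rounds; a slightly more careful count would likely give $\Oh(m)$, but any bound depending only on $G$ suffices. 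Thus there is $c\le |V(G)|+\Oh(|V(G)|^2)$, independent of the starting positions, after which the invariant holds, and by the protection phase the cop protects $V(P)$.
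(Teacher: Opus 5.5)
Your proposal is correct: it is the classical Aigner--Fromme shadow strategy, which the paper cites rather than reproves, and your catching-up phase correctly gives a constant $c$ that does not depend on the starting positions. Two cosmetic remarks: the domination property is never used (you only need that $\sigma$ is $1$-Lipschitz and fixes $V(P)$), and the reduction to connected $G$ is unnecessary and, as phrased, fails if the cop starts in another component. That second point does not matter here, because the paper defines protection only for connected $G$.
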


We note that~\citet{Mohar24} characterized all isometric subgraphs that can be protected by one cop, but for our purposes geodesics will be sufficient. 

The \emph{distance} between two vertices $u, v$ in a graph $G$, denoted by $\dist_G(u,v)$, is the minimum number of edges in a path between $u$ and $v$ in $G$. 
Given a vertex $v$ in $G$ and a nonnegative integer $d$, we define the ball of radius $d$ around a vertex and a set as follows:
\begin{align*}
B_G(v,d) &= \set{u\in V(G) \mid \dist_G(v,u)\leq d}, 
\intertext{and given a subset $A$ of vertices $G$, we also define}
B_G(A,d) &= \bigcup_{v\in A} B_G(v,d). 
\end{align*}

The following lemma presents a strategy for cops which will work as a subroutine in the strategy witnessing~\Cref{thm:vc} (and~\Cref{thm:first_sublinear_bound}). 
Given a graph $G$, $X\subseteq V(G)$, and a positive integer $s$, if every vertex of $G$ contains at most $K$ vertices of $X$ in distance at most $s-1$, then 
for every vertex $v$ of $G$, the set $B_G(v,s)\cap X$ can be protected by $2s\cdot K$ cops in $G$. 
We prove it by dividing a
set of cops into squads that rotate in shifts, we can ensure that a cop is
always dispatched to intercept the robber exactly when and where they attempt to
enter $X$ within the protected ball.


\begin{lemma}\label{lemma:our_main_reduction}
    Let $s$ and $K$ be positive integers,
    let $G$ be a connected graph, and let $X \subseteq V(G)$.
    If for every $w \in V(G)$ we have that $|B_G(w,s-1) \cap X| \leq K$, 
    then, for every $v \in V(G)$, the set $B_G(v,s) \cap X$ can be protected by $2s\cdot K$ cops in $G$.    
\end{lemma}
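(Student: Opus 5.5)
The plan is to split the $2s\cdot K$ cops into $2s$ \emph{squads} $Q_0,\dots,Q_{2s-1}$ of $K$ cops each, and to run them in shifts. The key idea is that the squad dispatched at time $t$ is responsible only for the robber's position at the single time $t+s-1$. Fix $v$ and write $T=B_G(v,s)\cap X$ for the set to be protected. Let $r_t$ denote the robber's vertex after step $t$.

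First comes an initialisation phase. Since $G$ is connected, after at most $|V(G)|$ steps all cops can be gathered at $v$. From then on, squad $Q_j$ follows a periodic schedule of period $2s$, and the squads are offset so that for every time $t$ (after the initialisation) exactly one squad is at $v$ and ready to depart at time $t$. Concretely, squad $Q_j$ departs at the times $t\equiv j \pmod{2s}$. Staggering the squads at the start costs at most $2s$ extra steps, in which some squads simply wait at $v$. The constant $c$ in the definition of protecting is this total set-up time plus $2s$.

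Next, the dispatch rule. When a squad departs at time $t$, it looks at the robber's current position $r_t$ and considers the target set $Y_t=B_G(r_t,s-1)\cap T$. By the hypothesis $|B_G(r_t,s-1)\cap X|\le K$, this set has at most $K$ vertices. Assign one cop of the squad to each $y\in Y_t$. Every $y\in T$ satisfies $\dist_G(v,y)\le s$, so during steps $t+1,\dots,t+s$ the assigned cop walks along a shortest path from $v$ to $y$, waiting at $y$ if it arrives early. Hence it occupies $y$ after the cops' move in step $t+s$. During steps $t+s+1,\dots,t+2s$ the squad walks back to $v$, which takes at most $s$ steps, and it is back at $v$ in time for its next departure at $t+2s$. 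This is exactly why $2s$ squads are needed.

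Finally, correctness. Suppose the robber is on some $y\in T$ after step $t'$, where $t'\ge c$. Set $t=t'-s+1$, which is at least the end of the initialisation. The robber moves at most one edge per step, so $\dist_G(r_t,y)\le s-1$ and hence $y\in Y_t$. The squad that departed at time $t$ therefore has a cop on $y$ after the cops' move in step $t+s=t'+1$. The robber has not moved since step $t'$, so it is caught.

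The main obstacle is getting the off-by-one timing right. The targets must be taken in the ball of radius $s-1$ around $r_t$, not radius $s$, so that the hypothesis bounds the squad size by $K$. At the same time, the cops still need $s$ moves to cover distance $s$ from $v$, which is why the robber is intercepted at time $t+s-1$ by a cop arriving in step $t+s$. The other thing to check is the bookkeeping that the $2s$-step round trip lets the $2s$ squads cover every departure time without gaps.
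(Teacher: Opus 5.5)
Your proposal is correct and is essentially the paper's proof. It uses the same ingredients:
\begin{itemize}
\item $2s$ patrols of $K$ cops each, rotating out of $v$ with period $2s$;
\item each patrol is dispatched to $B_G(r,s-1)\cap B_G(v,s)\cap X$, where $r$ is the robber's current position;
\item the same interception argument: a robber on a vertex of $B_G(v,s)\cap X$ was within distance $s-1$ of the position used for the dispatch $s-1$ steps earlier.
\end{itemize}
The only difference is cosmetic. You take $r_t$ to be the robber's position after step $t$, whereas the paper uses the position at the beginning of step $i$; this shifts the timing bookkeeping by one but does not change the argument.
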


\begin{proof}
Assume that for every $w\in V(G)$ we have $|B_G(w,s-1)\cap X|\le K$. Let $v$ be a fixed vertex of $G$. We employ $2sK$ cops, divided into $2s$ patrols $P_0, \ldots, P_{2s-1}$, each consisting of $K$ cops.

We begin by moving all cops to vertex $v$. For the sake of numbering, we designate the step immediately following their arrival at $v$ as step 0. Thus, at the beginning of step 0, all cops are at $v$.

We define the operation of \emph{dispatching a patrol $P_j$ at step $i$ to a target set $A \subseteq B_G(v,s)$}, where $|A| \le K$, as follows. Fix a surjective function $\pi\colon P_j \to A$ (which exists since $|P_j|=K \ge |A|$). Each cop $c \in P_j$:
\begin{enumerate}
    \item Selects a geodesic $Q$ from $v$ to their assigned target $\pi(c)$. Let $d = \dist_G(v, \pi(c)) \le s$.
    \item Starts moving from $v$ along $Q$ at the beginning of step $i$, moving one edge per step towards $\pi(c)$. The cop reaches $\pi(c)$ after the cops' move at step $i+d-1$.
    \item Remains on vertex $\pi(c)$ until the end of step $i+s-1$.
    \item Starting at step $i+s$, moves back along $Q$ towards $v$, reaching $v$ by the end of step $i+s+d-1 \le i+2s-1$. The cop is then ready to be dispatched again at step $i+2s$.
\end{enumerate}

The cops' strategy is as follows: for each $i \ge 1$, let $r_i$ be the position of the robber at the beginning of step $i$. We dispatch the patrol $P_j$ with $j \equiv i \pmod{2s}$ to the target set $A_i = B_G(v,s) \cap B_G(r_i, s-1) \cap X$. Note that $|A_i| \le K$ by our assumption, so the dispatch is valid.

We claim that for any $i \ge s$, if the robber occupies a vertex $r_i \in B_G(v,s) \cap X$, they are caught immediately. Consider the state at step $i$. The robber was at position $r_{i-s+1}$ at step $i-s+1$. Since the robber moves at most one edge per step, their current position $r_i$ must be within distance $s-1$ of their position $s-1$ steps ago. Therefore:
\[ r_i \in B_G(r_{i-s+1}, s-1). \]
Combined with the assumption $r_i \in B_G(v,s) \cap X$, this implies that $r_i$ was included in the target set $A_{i-s+1}$ dispatched at step $i-s+1$. Specifically, a cop from patrol $P_{j'}$ (where $j' \equiv i-s+1 \pmod{2s}$) was assigned to the vertex $r_i$. Since $\dist_G(v, r_i) \le s$, this cop arrived at $r_i$ at or before step $(i-s+1) + s - 1 = i$. Thus, the cop is located at $r_i$ at step $i$, capturing the robber.
\end{proof}

\Cref{lemma:geodesic_protected} and \Cref{lemma:our_main_reduction} are enough to prove that the cop number of any graph is sublinear in its vertex cover number. We include the proof purely for exposition purposes as in the next section, we prove a stronger bound (\Cref{thm:vc} and \Cref{thm:technical}) with a significantly more intricate proof.

\begin{theorem}\label{thm:first_sublinear_bound}
    For every $\epsilon>0$, there is a constant $c$ such that
    for every connected graph $G$,
    \[
        \cop(G) \leq \epsilon \vc(G) + c.
    \]
\end{theorem}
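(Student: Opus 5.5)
We prove \Cref{thm:first_sublinear_bound} by a greedy procedure that repeatedly removes geodesics, each protected by one cop using \Cref{lemma:geodesic_protected}, and then invokes \Cref{lemma:our_main_reduction} once the cover vertices have become sparse.

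Fix $\epsilon>0$ and choose an integer $s$ with $s\ge 2/\epsilon$, say $s=\lceil 2/\epsilon\rceil$. Let $X$ be a minimum vertex cover of $G$, so $|X|=k=\vc(G)$.

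\textbf{Dense phase.} We build a sequence of geodesics as follows. Set $G_0=G$. While some connected component $H$ of the current graph $G_j$ contains a vertex $w$ with $|B_{H}(w,s-1)\cap X|> s/\epsilon$, do the following.
\begin{enumerate}
\item The ball $B_H(w,s-1)$ is the union of at most $|B_H(w,s-1)\cap X|$ geodesics of length at most $s-1$ starting at $w$: take one geodesic from $w$ to each vertex of $X$ in the ball. Removing the vertices of these geodesics deletes more than $s/\epsilon$ vertices of $X$.
\item To keep the count of cops low, choose the geodesics so that each one covers many vertices of $X$. Alternatively, and more simply, protect the whole ball as one unit. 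Every geodesic from $w$ has at most $s$ vertices, so it contains at most $s$ cover vertices. Hence we need more than $(s/\epsilon)/s=1/\epsilon$ geodesics in total, each of which is cheap to protect.
\end{enumerate}
The cleaner accounting is to charge cops against the cover vertices removed. Call a step \emph{good} if a single geodesic of $H$ contains at least $1/\epsilon$ vertices of $X$; protecting such a geodesic costs one cop per $1/\epsilon$ removed cover vertices, as desired.

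To guarantee this, I would instead use the dense condition directly with \Cref{lemma:our_main_reduction}-like reasoning. Concretely, pick threshold $K$. If $|B_H(w,s-1)\cap X|>K$, then a shortest-path tree from $w$ has depth less than $s$ and reaches more than $K$ cover vertices, but a single geodesic may still carry few of them. This is the obstacle.

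\textbf{Fix.} Use that $X$ is a vertex cover. Along any path, every other vertex lies in $X$. So any geodesic of length $\ell$ in $H$ contains at least $\lfloor \ell/2\rfloor$ vertices of $X$. Hence the greedy rule should be: while some component $H$ has a geodesic of length at least $2/\epsilon+1$, protect it with one cop and delete it. Each deletion removes at least $1/\epsilon$ vertices of $X$, so this uses at most $\epsilon k$ cops. By \Cref{obs:removing_a_protected_set} applied iteratively (with $U$ the union of the protected geodesics), it then suffices to handle the remaining components.

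\textbf{Sparse phase.} At the end, every remaining component $C$ has diameter at most $D=\lceil 2/\epsilon\rceil$. A graph of diameter $D$ in which every edge meets $X$ might still have many vertices, so bounded diameter alone is not enough to catch the robber cheaply. The fix is to use \Cref{lemma:our_main_reduction} with $s=D+1$ and $v$ any vertex of $C$. Then $B_C(v,s)\cap X=V(C)\cap X$, and we need the bound $|B_C(w,s-1)\cap X|\le K$ for every $w$.

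So I would return to a two-threshold greedy. First remove geodesics, and also remove whole dense balls. A ball with more than $K$ cover vertices is protected by $2sK'$ cops using \Cref{lemma:our_main_reduction} applied inside the current component, where $K'$ is the local maximum. Optimizing this accounting, so that cops per removed cover vertex is at most $\epsilon$ while the final $K$ is a constant $c(\epsilon)$, is the main obstacle.

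Once every component has all balls containing at most $c(\epsilon)$ cover vertices and bounded diameter, \Cref{lemma:our_main_reduction} protects $V(C)\cap X$ with $2s\,c(\epsilon)$ cops. The robber is then confined to $V(C)\setminus X$, which is an independent set, so the robber is frozen on a single vertex and one more cop catches them. This yields $\cop(G)\le \epsilon k + c$.
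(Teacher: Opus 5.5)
\textbf{There is a genuine gap: the step you yourself call ``the main obstacle''.} Your geodesic step is sound and is the paper's first reduction. A geodesic on at least $2/\epsilon+1$ vertices contains at least $1/\epsilon$ vertices of $X$. Iterating \Cref{obs:removing_a_protected_set} inside the component where the robber is confined is exactly the paper's induction on $\vc(G)$.

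Your sparse phase presupposes that every ball of radius $D$ in $C$ meets $X$ in at most $c(\epsilon)$ vertices. Since $C$ has diameter at most $D$, this already says $\vc(C)\le c(\epsilon)$, so it assumes what remains to be proved. (Once it holds, $\cop(C)\le\vc(C)$ finishes directly, without \Cref{lemma:our_main_reduction}.)

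The two-threshold greedy you sketch does not get you to that state. \Cref{lemma:our_main_reduction} protects $B(v,s)\cap X$ with $2sK'$ cops only if $K'$ bounds $|B(w,s-1)\cap X|$ for \emph{every} $w$. If $K'$ is just the current maximum, nothing forces $2sK'\le \epsilon\,|B(v,s)\cap X|$. If the balls have essentially stopped growing by radius $s-1$, you pay about $2s$ cops per deleted cover vertex instead of $\epsilon$.

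\textbf{The paper's fix: a growth bound from the vertex-cover property.} This means only radii $1$ and $2$ need to be sparsified.
\begin{enumerate}
\item If some $|B(w,1)\cap X|\ge 1/\epsilon$, one cop on $w$ protects $B(w,1)$, and you recurse.
\item Otherwise all such sets have size at most $K=\lfloor 1/\epsilon\rfloor$. Then \Cref{lemma:our_main_reduction} with $s=2$ protects any $B(v,2)\cap X$ with $4K\le 4/\epsilon$ cops, which pays for itself whenever $|B(v,2)\cap X|\ge 4/\epsilon^2$.
\item If neither deletion applies, take $x\in X$ at distance $i\ge 3$ from $v$, a neighbor $y$ of $x$ at distance $i-1$, and a neighbor $z$ of $y$ at distance $i-2$. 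Since the edge $yz$ is covered, $x$ lies within distance $1$ of $B(v,i-1)\cap X$ or within distance $2$ of $B(v,i-2)\cap X$. Induction on $i$ gives $|B(v,i)\cap X|<(3/\epsilon)^i$.
\end{enumerate}
With diameter below $2/\epsilon$, this yields $\vc(G)<(3/\epsilon)^{2/\epsilon}$.

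\textbf{Alternative repair of your own idea.} Put a threshold at every scale and choose the radius minimally, as in the proof of \Cref{lem:main}. Set $T_1=1/\epsilon$ and $T_s=2sT_{s-1}/\epsilon$, and take the \emph{smallest} $s$ such that $|B(v,s)\cap X|\ge T_s$ for some $v$. For $s=1$, use one cop on $v$. For $s\ge 2$, minimality gives $K'\le T_{s-1}$, so deleting $B(v,s)\cap X$ costs at most $2sT_{s-1}=\epsilon T_s$ cops. If no such $s\le D$ exists, then $\vc(C)<T_D$. This avoids the vertex-cover-specific growth argument, at the price of a larger constant.
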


\begin{proof} 
    Fix $\epsilon>0$ and let $c = (3/\epsilon)^{2/\epsilon}+1$, and  let $G$ be a connected graph.
    We proceed by induction on $\vc(G)$, with the base case $\vc(G)=0$ being trivial. 

    For the inductive case, assume that $\vc(G)\geq 1$ and that the theorem holds for smaller values of the vertex cover number. 
    Let $X$ be a vertex cover of $G$ of size $\vc(G)$.
    
    Suppose first that $G$ has diameter at least $\tfrac2{\epsilon}$. Then there is a geodesic $P$ in $G$ with $|V(P)|\ge \tfrac2\epsilon+1$. Note that every vertex cover intersects $V(P)$ in at least $\tfrac12(|V(P)|-1)$ vertices, and thus $|V(P) \cap X| \geq \frac{1}{\epsilon}$. 
    By \Cref{lemma:geodesic_protected}, $V(P)$ can be protected in $G$ by a single cop.
    If $V(P) = V(G)$, then we are done, so suppose $V(G)\setminus V(P)\ne \emptyset$. Let $C$ be a connected component of $G-V(P)$ with maximum cop number. 
    By \Cref{obs:removing_a_protected_set},
    $\cop(G) \leq 1 + \cop(C)$.
    Note that $\vc(C) \leq |X| - |V(P)\cap X| \leq \vc(G) - \frac{1}{\epsilon}$.
    By the induction hypothesis,
    we deduce that
\[\cop(G)\le 1 + \cop(C) \le 1 + \epsilon\vc(C) + c\le 1 + \epsilon\left(\vc(G)-\frac{1}{\epsilon}\right) + c =\epsilon \vc(G) + c,\]
as desired. Therefore, in the remainder of the proof, we assume that $G$ has diameter less than $\frac{2}{\epsilon}$.

Now suppose that there is a vertex $w$ in $G$ with $|B_G(w,1) \cap X| \geq \frac{1}{\epsilon}$. 
Let $C$ be a connected component of $G - B_G(w,1)$ with maximum cop number. 
We have $\vc(C) \leq |X| - |B_G(w,1) \cap X| \leq \vc(G) - \frac{1}{\epsilon}$. Hence, since $B_G(w,1)$ can be protected by one cop, by~\Cref{obs:removing_a_protected_set} and the induction hypothesis, we again obtain 
\[
\cop(G) \leq 1 + \cop(C) 
\leq 1 + \epsilon\vc(C) + c
\leq \epsilon\vc(G) + c.
\]
 
    Thus, we may assume that $|B_G(w,1) \cap X| < \frac{1}{\epsilon}$ for every vertex $w \in V(G)$. 
    Since the value on the left-hand side is an integer, we have 
    $|B_G(w,1) \cap X| \leq \lfloor\frac{1}{\epsilon}\rfloor$ for every vertex $w \in V(G)$. 
    In particular, applying \Cref{lemma:our_main_reduction} with $s=2$ and $K =  \lfloor\frac{1}{\epsilon}\rfloor$, we obtain that for every vertex $v\in V(G)$, 
    the set $B_G(v,2) \cap X$ can be protected by $4K$ cops in $G$.

    Assume that there is a vertex $v \in V(G)$ with $|B_G(v,2) \cap X| \geq \frac{4}{\epsilon^2}$. By the induction hypothesis applied to $G - B_G(v,2)$ and \Cref{obs:removing_a_protected_set},
    we deduce that
\[        \cop(G)
        \leq 4K + \epsilon\left(\vc(G) - \frac{4}{\epsilon^2}\right) + c 
        = \left\lfloor\frac{4}{\epsilon}\right\rfloor + \epsilon\vc(G)-\frac{4}{\epsilon} + c \leq \epsilon \vc(G) + c.
\]
Therefore, we may assume that $|B_G(v,2) \cap X| < \frac{4}{\epsilon^2}$ for every vertex $v \in V(G)$. 

We now prove that 
\[
|B_G(v,i)\cap X|< \left(\frac{3}{\epsilon}\right)^i
\]
holds for every $v\in V(G)$ and $i\ge 1$. 
The proof goes by induction on $i$.  
Recall that we assumed $|B_G(v,1)\cap X| < \frac{1}{\epsilon}$ and $|B_G(v,2)\cap X| < \frac{4}{\epsilon^2}$.
Thus, the statement is clear for $i \in \{1,2\}$, and we suppose that $i \geq 3$. 
Consider some vertex $x\in B_G(v,i)\cap X$, let $y$ be a neighbor of $x$ that is in $B_G(v,i-1)$, and let $z$ be a neighbor of $y$ that is in $B_G(v,i-2)$. 
Observe that either $y\in X$, or if not, then $z\in X$, since the edge $yz$ is covered by $X$. 
We deduce that $x$ is either at distance at most 1 from the set $B_G(v,i-1)\cap X$, or at distance at most 2 from the set $B_G(v,i-2)\cap X$. It follows that  
\begin{align*}
    |B_G(v,i)\cap X|\le& \left|\bigcup \{ B_G(x,1) \cap X \mid x \in B_G(v,i-1) \cap X\}\right| \\
    + &\left|\bigcup \{ B_G(x,2) \cap X \mid x \in B_G(v,i-2) \cap X\}\right| \\ 
    <& \frac1\epsilon\cdot \left(\frac{3}{\epsilon}\right)^{i-1} +\frac4{\epsilon^2}\cdot \left(\frac{3}{\epsilon}\right)^{i-2}= \left(\frac13+\frac49\right)\left(\frac{3}{\epsilon}\right)^i \le \left(\frac{3}{\epsilon}\right)^i.
\end{align*}

Recall that we reduced to the case where $G$ has diameter less than $\frac{2}{\epsilon}$. Hence, for every vertex $v\in V(G)$, \[\vc(G)=|X| = |V(G) \cap X|=|B_G(v,\lfloor 2/\epsilon\rfloor)\cap X| < \left(\frac{3}{\epsilon}\right)^{2/\epsilon}.\]
As observed in the introduction, $\cop(G)\le \vc(G)$, 
and thus $\cop(G) < \left(\frac{3}{\epsilon}\right)^{2/\epsilon} < c$.
    This concludes the proof. 
\end{proof}

The proof of \Cref{thm:first_sublinear_bound} actually shows that $\epsilon$ can be taken to be a 
 sufficiently small constant times $\tfrac{\log \log \vc(G)}{\log \vc(G)}$, and so we obtain that \[\cop(G)=O\left(\vc(G)\cdot \tfrac{ \log \log \vc(G)}{\log \vc(G)}\right).\] 

\section{Proof of the main result}

In this section we prove the main result of this paper, Theorem~\ref{thm:technical}. 
Our proof strategy generalizes the framework established by Scott and Sudakov~\cite{Scott-Sudakov}, namely that if a graph has a large diameter, they reduce the graph by protecting a long geodesic with one cop and recurse on the remaining graph using \Cref{lemma:geodesic_protected}; if the diameter of the graph is small, they employ a probabilistic cop-placement strategy to capture the robber. We refine and generalize this approach. We begin by providing some additional intuition.

Let $G$ and $X$ be as in \Cref{thm:technical}.
The robber starts on an arbitrary vertex $v$. The ball $B_G(v,s)$ is the set of
vertices that the robber can potentially reach after $s$ steps. The main idea is
to analyze the growth rate of this ball restricted to a subset of vertices $X$
(i.e., the intersection $B_G(v,s) \cap X$). The set $X$ is essentially the
region that cops protect. We want to cover this intersection after a certain
number of steps to prevent the robber from safely entering $X$. Since $X$ acts
as a cut set (e.g., a vertex cover), this effectively confines the robber to a
small component of $G-X$, where they are easily caught by a few additional cops
(as formalized in Observation~\ref{obs:removing_a_protected_set}). For portions
of the restricted ball that grow quickly (dense regions), we employ a deterministic patrolling
 strategy; for portions that grow slowly (sparse
regions), we employ a probabilistic cop-placement strategy.

We formalize this intuition by refining the framework of Scott and Sudakov~\cite{Scott-Sudakov} based on the structure of $X$:

\begin{enumerate}

\item \textbf{Long Paths:} If $X$ contains a long geodesic (such as when $G$ has
large diameter), we protect and remove the path using a single cop.



\item \textbf{A ball of fast growth in $X$:}
A major challenge in
adapting~\cite{Scott-Sudakov} is when $X$ does not contain a long geodesic but
there is a ball whose intersection with $X$ is too big, which causes the probabilistic density
arguments to fail. We resolve this in \Cref{lemma:our_main_reduction}:
if $s$ is the smallest radius for which $B(v,s) \cap X$ is too big (relative to $s$)
for some $v \in V(G)$,
then \Cref{lemma:our_main_reduction} shows that $B(v,s) \cap X$ can be protected 
by a deterministic patrolling strategy, and we can then apply induction to $G-B(v,s)$.

\item \textbf{Balls of slow growth in $X$:} If the intersection of all balls with $X$ is small (relative to the radius of the balls), we generalize the probabilistic
approach of Scott and Sudakov restricted to $X$ (\Cref{lem:main}) to show that
random cop placements capture the robber.

\end{enumerate}

Our main technical contribution is the following result.

\begin{lemma}\label{lem:main} 
    Let $k,d$ be integers with $k,d\ge 2$, and let 
    \[
    f(k,d):=2^{(\log k)^2 + 2^8 \log(kd)(\log \log k +d)}. 
    \]
    Let $G$ be a connected graph and let $X \subseteq V(G)$.
    Suppose that for each connected component $C$ of $G-X$, the diameter of $V(C)$ in $G$ is less than $d$. 
    Then  $X$ can be protected by at most $|X|/k + f(k,d)$ 
    cops in $G$. 
\end{lemma}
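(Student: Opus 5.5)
We prove the statement by induction on $|X|$, following the dichotomy sketched above (long geodesics, fast-growing balls, slow-growing balls). We use \Cref{obs:removing_a_protected_set} to peel off pieces of $X$ that a few cops can protect, at a rate of at most one cop per $k$ vertices of $X$ removed, and we recurse on the components of $G-U$. Inside a component $C$ of $G-U$, the hypothesis on $X\cap V(C)$ is inherited: components of $C-X$ are components of $G-X$, and their diameter in $C$ can only grow relative to $G$. The diameter issue needs a little care. We should therefore state the induction with distances measured in $G$, or note that every relevant path stays inside $C$ together with its adjacent $X$-vertices. This is a technical point I expect to handle by rephrasing the hypothesis as ``each component of $G-X$ has weak diameter less than $d$.''

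\textbf{Step 1 (long geodesics).} Suppose some geodesic $P$ has $|V(P)\cap X|\ge k$. Then one cop protects $V(P)$ by \Cref{lemma:geodesic_protected}, and induction on each component of $G-V(P)$ gives $1+(|X|-k)/k+f(k,d)$ cops. From now on every geodesic meets $X$ in fewer than $k$ vertices. Since consecutive $X$-vertices on a geodesic are at distance at most $d$, this bounds the diameter of $X$ (and of $G$) by roughly $kd$.

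\textbf{Step 2 (fast balls).} Choose a growth profile $g(s)$ with $g(1)=k$; following Scott--Sudakov, take something like $g(s)=k^{s}\cdot 2^{-\Theta(s\sqrt{\cdot})}$, with constants tuned to produce $f$. Let $s$ be minimal such that some $v$ has $|B_G(v,s)\cap X|\ge 2sk\,g(s-1)$. By minimality, every $w$ has $|B_G(w,s-1)\cap X|\le K:=g(s-1)$-ish. So \Cref{lemma:our_main_reduction} protects $B_G(v,s)\cap X$ with $2sK$ cops, at most $|B(v,s)\cap X|/k$. Consider the components of $G-B_G(v,s)\cap X$. The protected set here is only the $X$-part of the ball, so we apply \Cref{obs:removing_a_protected_set} with $U=B_G(v,s)\cap X$ and recurse, closing the induction as before.

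\textbf{Step 3 (slow growth, the main obstacle).} We now have $|B_G(w,s)\cap X|<2sk\,g(s-1)$ for all $w$ and all $s$ up to the diameter bound $D\approx kd$. We place $|X|/k+f(k,d)$ cops in a randomized Scott--Sudakov manner. At distance $t$ ahead of the robber, a random cop is sent to each vertex of $X$ near the robber's possible position. Because balls in $X$ are small, a union/expectation bound over the robber's possible routes inside $X$ shows that with positive probability the cops guard every vertex of $B(r,t)\cap X$ at the time the robber could arrive. Robber moves through $G-X$ are harmless since each excursion lasts fewer than $d$ steps between $X$-visits, which is where the $+d$ terms in the exponent of $f$ come from. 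The delicate part is choosing scales so that the expected-coverage computation gives $|X|/k$ cops plus an additive $2^{(\log k)^2+O(\log(kd)(\log\log k+d))}$ term. I would first try iterating the Scott--Sudakov lemma (``if all $X$-balls of radius $r$ have size $\le m$, then $X$ is protected by $O(|X|\log m/ m)$-type many cops'') at geometric radii $r_i$. I would then absorb the losses into $f$ via the $(\log k)^2$ term.
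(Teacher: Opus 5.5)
Your Steps 1 and 2 follow the paper's reduction: protect a geodesic rich in $X$; then take the smallest radius $s$ at which some ball is too rich in $X$, and handle it with \Cref{lemma:our_main_reduction} and \Cref{obs:removing_a_protected_set}. Step 3, however, is the actual content of the lemma, and it is not an argument. The mechanism you sketch cannot work as described. Cops placed on $X$ with density about $1/k$, independently of the robber, can put a distinct cop on every vertex of a set $Y\subseteq X$ within $\ell$ steps only if Hall's condition $|B_G(Y',\ell)\cap C|\ge|Y'|$ holds for all $Y'\subseteq Y$. Heuristically, a random placement delivers this only for sets all of whose subsets \emph{expand}, that is, $|B_G(Y',\ell)\cap X|\gtrsim k|Y'|$ up to logarithmic factors. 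Step 2 gives you only \emph{upper} bounds on ball growth, which provide no expansion, so nothing lets you ``guard every vertex of $B(r,t)\cap X$''.

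The lemma you plan to iterate cannot hold as stated: its hypothesis weakens as $m$ grows while its conclusion strengthens. Bounds of the form $O(n\log m/m)$ come from \emph{lower} bounds on neighbourhood sizes, as in random domination. Moreover, the hypothesis bounds the diameter of components of $G-X$, not the time the robber spends in them. The robber can wait in a component indefinitely, so ``each excursion lasts fewer than $d$ steps'' is false. One-shot arrivals timed to when the robber could arrive therefore do not suffice; the blocking cops must stay put forever.

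The missing idea is the paper's expansion dichotomy. Set $x=|X|$, $t=\lceil\log(2dk)\rceil$ and $\alpha=32t\log(2tx)$. \Cref{cla:1} fixes sets $C_1,\dots,C_t\subseteq X$ of size at most $x/(kt)$ such that $|B_G(Y,2^i+d)\cap X|\ge\alpha k|Y|$ implies $|B_G(Y,2^i+d)\cap C_i|\ge|Y|$. The union bound there is over all subsets $Y\subseteq X$, not over robber routes, so one placement works against every robber. Starting from $A_1=B_G(r_0,2d+1)\cap X$, split $B_G(A_{i-1},2^{i-2}+d)\cap X$ into a maximal $A_i$ with $|B_G(A_i,2^{i-1}+d)\cap X|\le\alpha k|A_i|$ and the rest $D_i$. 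By maximality, every nonempty $Y\subseteq D_i$ expands by more than $\alpha k$. Hall's theorem then sends cops from $C_{i-1}$ to occupy all of $D_i$ permanently by step $2^{i-1}+d$. By induction, the robber is then in $A_i$ or in a component of $G-X$ whose neighbourhood lies in $A_i\cup\bigcup_{j\le i}D_j$. This is where the bounded diameter of components enters, spatially rather than temporally.

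The slow-growth conclusion of Step 2 is used only to get $|A_1|\le(8kd)^{4d}$, and $|A_{i+1}|\le\alpha k|A_i|$ at each scale. By Step 1, $G$ has diameter less than $2dk\le 2^t$, so a nonempty $A_{t+1}$ would give $x\le(\alpha k)^{t+1}(8kd)^{4d}$, contradicting $x>f(k,d)$. Hence $A_{t+1}=\emptyset$ and the robber is sealed into a component of $G-X$. The term $(\log k)^2$ in $f$ is the leading term of $(t+1)\log(\alpha k)$.

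Two smaller points. First, in Step 2 your suggested growth profile goes the wrong way. Minimality of $s$ only gives $K<T(s-1)$ for your threshold $T$, so $2sK\le T(s)/k$ requires $T(s)\ge 2sk\,T(s-1)$. The paper's $T(s)=s!\,2^{s-1}k^s$ satisfies this and grows faster than $k^s$, unlike $k^s2^{-\Theta(s\sqrt{\cdot})}$. Also, $s=1$ needs the separate one-cop argument for $|B_G(v,1)\cap X|\ge k$, since \Cref{lemma:our_main_reduction} with $s=1$ costs two cops. Second, ``weak diameter'' is exactly the lemma's hypothesis (diameter of $V(C)$ in $G$), so rephrasing does not address the inheritance issue you raise. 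The paper applies the induction hypothesis to $G-V(P)$, $G-B_G(v,1)$ and $G-B_G(v,s)$ without discussing it. This is harmless in the vertex-cover application, where components of $G-X$ are single vertices.
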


\begin{proof}
We proceed by induction on $x=|X|$. If $x\le f(k,d)$ then the result clearly holds, as $X$ can be protected by $x$ cops. So we may assume in the remainder of the proof that $x>f(k,d)$.

If there exists a geodesic $P$ in $G$ with $|V(P)\cap X|\ge k$, or a vertex $v\in V(G)$ with $|B_G(v,1)\cap X|\ge k$, then since such sets $V(P)$ or $B_G(v,1)$ can be protected by a single cop in $G$, it follows from the induction hypothesis applied to $G - V(P)$ or $G - B_G(v,1)$, and Observation \ref{obs:removing_a_protected_set}, that $X$ can be protected by at most 
\[
1+(x-k)/{k}+f(k,d)=x/k+f(k,d)
\]
cops in $G$, as desired. So we assume in the remainder of the proof that 
\begin{align}
\label{eq:geodesic_vs_X}
    |V(P)\cap X|&\le k-1&&\textrm{for every geodesic $P$ in $G$}
\intertext{and}
|B_G(v,1)\cap X|&\le k-1&&\textrm{for every $v \in V(G)$.}
\label{eq:1ball_vs_X}
\end{align}

Consider a geodesic $P$ in $G$, and observe that every subpath $P'$ of $P$ on $d+1$ vertices intersects $X$, since otherwise some connected component $C$ of $G-X$ would contain two vertices that have distance at least $d$ in $G$, which is a contradiction with our initial assumption that $V(C)$ has diameter less than $d$ in $G$. 
This implies 
$|V(P)\cap X|\ge \left\lfloor \tfrac{|V(P)|}{d+1}\right\rfloor \ge \tfrac{|V(P)|}{d+1}-1$, 
and, using~\eqref{eq:geodesic_vs_X}, 
\[
|V(P)|\le (d+1)|V(P)\cap X|+d+1 \le (d+1)k \le 2dk.
\]
Since this holds for every geodesic $P$ in $G$, we infer that 
\begin{equation}
\textrm{the diameter of $G$ is less than $2dk$.}
\label{eq:diameter-of-G}
\end{equation}

Assume that there exists $v \in V(G)$ and an integer $s \geq 1$ such that
    \[
    |B_G(v,s) \cap X| \geq s! \cdot 2^{s-1} k^{s}.
    \]
    Consider such a pair $(v,s)$ with $s$ minimum, and note that $s \geq 2$ by~\eqref{eq:1ball_vs_X}.
    By the minimality of $s$,
    we have $|B_G(w,s-1) \cap X| < (s-1)! \cdot 2^{s-2} k^{s-1}$ for every $w \in V(G)$.
    Therefore, by \Cref{lemma:our_main_reduction},
    the set $B_G(v,s) \cap X$ can be protected by $2s \cdot (s-1)! \cdot 2^{s-2} k^{s-1} = s! \cdot 2^{s-1} k^{s-1}$ cops.
    Then, by \Cref{obs:removing_a_protected_set} and the induction hypothesis applied to $G - B_G(v,s)$,
    the set $X$ can be protected by at most
    \[
        s! \cdot 2^{s-1} k^{s-1} + \frac{x - s!\cdot  2^{s-1} k^{s}}{k} + f(k,d) = \frac{x}{k} + f(k,d)
    \]
    cops in $G$, as desired. So we may assume in the remainder of the proof that
    \begin{equation*}
        |B_G(v,s) \cap X| < s! \cdot 2^{s-1} k^{s}<(2ks)^s \quad \quad \text{for all $v \in V(G)$ and $s \geq 1$,}
    \end{equation*}
    and in particular
    \begin{equation}\label{eq:balls_are_small_2d+1}
        |B_G(v,2d+1) \cap X| \le (2k(2d+1))^{2d+1}\le (8kd)^{4d} \quad \quad\text{for all $v \in V(G)$}.
    \end{equation}
    
    We set \[t:=\lceil\log (2dk)\rceil \ \quad \quad \text{ and} \ \quad \quad \alpha:=32 t \log(2t x).\]
We will need the following result, which is similar in spirit to \cite[Claim 2.3]{Scott-Sudakov} and follows from standard Chernoff bounds. We provide a detailed proof in \Cref{app} for completeness.
    \begin{claim}\label{cla:1} There exist $t$ 
    sets $C_1, \ldots,C_{t} \subseteq X$ such that for every $i\in [t]$,
    \begin{itemize}
        \item $|C_i|\le \tfrac{|X|}{kt}$, and
        \item for every $Y\subseteq X$ such that $|B_G(Y,2^{i}+d)\cap X|\ge \alpha k|Y|$, we have $|B_G(Y,2^{i}+d)\cap C_i|\ge |Y|$.
    \end{itemize}
    \end{claim}

    Next, we define pairs of subsets $A_i,D_i\subseteq X$ recursively for every $i\ge 1$ as follows. We start by setting $A_1:=B_G(r_0,2d+1)\cap X$, where $r_0$ is the initial location of the robber, and $D_1:=\emptyset$. For each $i\ge 2$, we define $A_{i}$ as an inclusion-wise maximal subset of $B_G(A_{i-1},2^{i-2}+d)\cap X$ such that  
    \begin{equation}\label{eq:1}
    |B_G(A_{i},2^{i-1}+d)\cap X|\le \alpha k |A_{i}|,
    \end{equation}
    and we set $D_{i}:=(B_G(A_{i-1},2^{i-2}+d)\cap X)\setminus A_{i}$. Note that the empty set satisfies condition~\eqref{eq:1}, so $A_i$ is well defined.

For every $i\in [t]$ and every vertex $v\in C_i$, we place a cop on $v$ at step 0 (vertices appearing in multiple sets $C_i$ receive multiple cops).
We claim that the cops can move in such a way that for every $i\in \{2, \dots, t+1\}$, 
every vertex of $D_i$ is occupied by a cop at the end of step $2^{i-1}+d$ (and this cop remains there until the end of the game). 
To see why this holds, observe that by the maximality of $A_{i}$, 
\[
|B_G(Y,2^{i-1}+d)\cap X|>\alpha k |Y| \quad \quad \textrm{ for every } i\in \{2, \dots, t+1\} \textrm{ and every } Y\subseteq D_{i} \textrm{ with } Y \neq \emptyset
\]
since otherwise $Y$ could by added to $A_{i}$, contradicting its maximality. 
It follows from~\Cref{cla:1} that  
\[
|B_G(Y,2^{i-1}+d)\cap C_{i-1}|\ge |Y| \quad \quad \textrm{ for every } i\in \{2, \dots, t+1\} \textrm{ and every } Y\subseteq D_{i}. 
\]
By Hall's marriage theorem \cite{Hall}, for every $i\in \{2, \dots, t+1\}$ there is an injective map $\phi_i\colon D_{i}\to C_{i-1}$  such that each vertex $v\in D_{i}$ lies at distance at most $2^{i-1}+d$ from $\phi_i(v)$ in $G$. This suggests the following strategy for the cops: For every $i\in \{2, \dots, t+1\}$ and every vertex $v\in D_i$, a cop located on $\phi_i(v)\in C_{i-1}$ moves towards $v$ along a geodesic between $\phi_i(v)$ and $v$ as soon as the game starts, reaching $v$ at step (at most) $2^{i-1}+d$, and stays there until the end of the game. Note that we only need $t\cdot \tfrac{|X|}{kt}\le \tfrac{|X|}{k}$ cops for this strategy, and this guarantees that for every $i\in \{2, \dots, t+1\}$, at the end of step $2^{i-1}+d$, every vertex of $D_i$ is occupied by a cop, who stays there until the end of the game. 
We remark that this is trivially true for $i=1$ as well, since $D_1=\emptyset$. 

For every $i\ge 1$, set 
\[
N_i:=A_i\cup \bigcup_{j\in [i]} D_j. 
\] 
As $A_i\subseteq B_G(A_i,2^{i-1}+d)\cap X = A_{i+1} \cup D_{i+1}$, 
we have $N_i\subseteq N_{i+1}$ for every $i\ge 1$.
We now prove that, as a consequence of the strategy above, for every $i\in [t+1]$, at the end of step $2^{i-1}+d$, 
\begin{enumerate}[label=(\Alph*)]
    \item\label{eq1} if the robber is in $X$, then it is in $A_i$, 
    \item\label{eq2} otherwise, the robber is in a connected component $C$ of $G-X$ such that $N_G(V(C))\subseteq N_i$.
\end{enumerate}
We prove the result above by induction on $i\ge 1$.
We start with the base case $i=1$. Recall that $A_1=B_G(r_0,2d+1)\cap X$, where $r_0$ denotes the starting location of the robber. If the robber lies in $X$ at the end of step $2^{1-1}+d=d+1$, then as \[B_G(r_0,d+1)\cap X\subseteq B_G(r_0,2d+1)\cap X=A_1,\] the robber lies in $A_1$. Assume now that the robber lies in some connected component $C$ of $G-X$ at the end of step $d+1$, say on some vertex $r_1$. As $V(C)$ has diameter less than $d$ in $G$, all the vertices of $C$ lie in $B_G(r_1,d-1)\subseteq B_G(r_0,2d)$, and thus $N_G(V(C))\subseteq B_G(r_0,2d+1)\cap X=A_1=N_1$  (recall that $D_1=\emptyset$).

We now consider the case $i\ge 2$. By the induction hypothesis, at the end of step $2^{i-2}+d$, the robber either lies in $A_{i-1}$, or in some connected component $C$ of 
$G-X$ such that $N_G(V(C)) \subseteq N_{i-1}$. If the robber lies in a connected component $C$ of $G- X$ at the end of step $2^{i-2}+d$ and is also in $C$ at the end of step $2^{i-1}+d$, then we have $N_G(V(C))\subseteq N_{i-1}\subseteq N_i$ and the desired property holds. So we may assume that the robber lies in $A_i$ at the end of step $2^{i-2}+d$, or exits a connected component $C$ of $G-X$ at some step $s$ with $2^{i-2}+d+1\le s \le 2^{i-1}+d$, meaning that the robber was in $C$ at the end of step $s-1$ but is no longer in $C$ at the end of step $s$. 
Recall that at the end of step $2^{i-2}+d$, every vertex of $\bigcup_{j\in [i-1]}D_{j}$ is occupied by a cop and remains occupied until the end of the game. This implies that after step $2^{i-2}+d$, the robber can only exit $C$ through a vertex of $N_{i-1}\setminus \bigcup_{j\in [i-1]}D_{j}\subseteq A_{i-1}$. 
It follows that the robber is in $A_{i-1}$ at the end of some step $s$ with $2^{i-2}+d\le s \le 2^{i-1}+d$. As a consequence, the robber lies in $B_G(A_{i-1},2^{i-2})\subseteq B_G(A_{i-1},2^{i-2}+d)$ at the end of step $2^{i-2}+2^{i-2}+d=2^{i-1}+d$. It follows from the cops' strategy that at the same moment, every vertex of $D_i$ is occupied by a cop, which implies that the robber must indeed be located in $B_G(A_{i-1},2^{i-2}+d)\setminus D_i$. If the robber is in $X$, then it must be in $(B_G(A_{i-1},2^{i-2}+d)\cap X)\setminus D_i=A_i$, as desired. It only remains to consider the case where the robber is in a connected component $C'$ of $G-X$ at the end of step $2^{i-1}+d$, say on some vertex $r$. As the diameter of $V(C')$ in $G$ is less than $d$, every vertex of $C'$ is at distance less than $d$ from $r$, and in particular $N_G(C')\subseteq B_G(r,d)\cap X$. Since $r\in B_G(A_{i-1},2^{i-2})$, this shows that $N_G(C')\subseteq B_G(A_{i-1},2^{i-2}+d)\cap X=A_i\cup D_i\subseteq N_i$, which concludes the proof that \ref{eq1} and \ref{eq2} hold at the end of step $2^{i-1}+d$.

The final goal is to show that $A_{t+1}=\emptyset$. By \ref{eq1} and \ref{eq2}, this implies that at the end of step $2^{t}+d$, the robber lies in some connected component $C$ of $G-X$ such that all the vertices of $N_G(V(C))$ are occupied by a cop, and thus the robber remains stuck in $C$ forever. This shows that $X$ is indeed protected by at most $|X|/k$ cops and concludes the proof. 

To show that $A_{t+1}$ is empty, we argue by contradiction and assume that it is not. 
Recall that $|A_1|=|B_G(r_0,2d+1)\cap X|\le (8kd)^{4d}$ by \eqref{eq:balls_are_small_2d+1}, and for every $i\ge 1$, 
\[|A_{i+1}|\le |B_G(A_i,2^{i-1}+d)\cap X|\le \alpha k |A_i|\] 
by \eqref{eq:1}. 
This implies that $|A_{t+1}|\le (\alpha k)^t |A_1|\le (\alpha k)^t\cdot (8kd)^{4d}$,
and thus \[|B_G(A_{t+1},2^{t}+d)\cap X|\le \alpha k |A_{t+1}|\le (\alpha k)^{t+1}\cdot (8kd)^{4d}.\]

On the other hand, recall that $t=\lceil \log (2dk)\rceil$ and that $G$ has diameter at most $2dk \le 2^t\le 2^t+d$, and hence since $A_{t+1}$ is non-empty, $|B_G(A_{t+1},2^{t}+d)\cap X|=|X|=x$. We obtain \[x\le (\alpha k)^{t+1}\cdot (8kd)^{4d}.\]
Taking the logarithm on both sides, we have
\begin{eqnarray}\label{eq:logx1}
    \log x \le  (t+1) \log (\alpha k)+4d \log (8kd). 
\end{eqnarray}
Recall that  $t=\lceil \log (2dk)\rceil\le \log (2dk)+1=\log (4dk)$ and
\begin{eqnarray*}
    \alpha=32 t \log(2t x)& \le & 32 \log(4dk) (\log x + \log (2\log (4dk)))\\
    & \le & 32 \log x \cdot (\log x + \log \log x +1)\\
    & \le & 64 \log^2 x,
\end{eqnarray*}
where we have used $\log x\ge \log (f(k,d))\ge \log(4dk)$. As a consequence, \[\log( \alpha k)\le \log k+\log (64 \log^2x)=\log k+2 \log \log x +6\le \log k+4 \log \log x,\] since $x\ge f(k,d)\ge 2^{8 }$.
Plugging this into \eqref{eq:logx1}, we obtain 
\begin{eqnarray}\label{eq:logx2}
\log x \le   (\log(4dk)+1) (\log k+4 \log \log x)+4d \log (8kd).
\end{eqnarray}
Writing $\log (4dk)=2+\log d +\log k$ and $\log (8dk)=3+\log d +\log k$ and expanding the two products in the right hand side of \eqref{eq:logx2}, we obtain a sum  of 8 terms, 
each of which is at most $16d \log d \cdot \log^2k \cdot \log \log x \le 16d^2 \log^2k \cdot\log \log x$.  It follows that 
\[\log x \le 2^7 d^2 \log^2 k \cdot \log \log x.\]
Taking the logarithm on both sides again, we obtain 
\begin{eqnarray*}
\log \log x & \le & 7+ 2\log \log k+2 \log d + \log \log \log x\\
& \le & 7+ 2\log \log k+2 \log d + \tfrac12 \log \log x,
\end{eqnarray*}
and thus $\log \log x  \le  14+ 4\log \log k+4 \log d$.
Substituting this in \eqref{eq:logx2}, we obtain
\begin{eqnarray*}
\log x & \le &  (\log(4dk)+1) (\log k+52+ 16\log \log k+16 \log d)+4d \log (8kd)\\
& < & \log^2k + 2^8 \log(kd)(\log \log k +d).
\end{eqnarray*}
This contradicts the assumption that $x=|X|\ge f(k,d)=2^{\log^2k + 2^8 \log(kd)(\log \log k +d)}$ and concludes the proof.
\end{proof}

We now deduce \Cref{thm:technical} from Lemma \ref{lem:main}.


\noindent {\it Proof of \Cref{thm:technical}.}
We set \[k=2^{\sqrt{\log x -2^9 (\sqrt{\log x}+ \log d)(\log \log x +d)}}.\] By Lemma \ref{lem:main}, $X$ can be protected by at most $x/k+f(k,d)$ cops. As each connected component of $G-X$ has cop number at most $p$, it follows that $\cop(G)\le x/k+f(k,d)+p$.

\smallskip

As $k\le 2^{\sqrt{\log x}}$, we have $\log k\le \sqrt{\log x}$ and $\log \log k\le \log \log x$. It follows that
\begin{eqnarray*}
f(k,d) & = & 2^{\log x -2^9 (\sqrt{\log x}+ \log d)(\log \log x +d)}\cdot 2^{2^8 (\log k+\log d)(\log \log k +d)}\\
& \le  & 2^{\log x -2^9 (\sqrt{\log x}+ \log d)(\log \log x +d)}\cdot 2^{2^8 (\sqrt{\log x}+\log d)(\log \log x +d)}\\
& \le  & 2^{\log x -2^8 (\sqrt{\log x}+\log d)(\log \log x +d)}\\
& \le  & x\cdot 2^{-2^8 \sqrt{\log x}}\\
& \le & x/k.
\end{eqnarray*}
It follows that $\cop(G)\le 2x/k+p=x\cdot 2^{-\sqrt{\log x-\Oh((\sqrt{\log x}+\log d)(\log \log x +d))}}+p$, as desired.
\hfill $\Box$


Theorem \ref{thm:vc} follows directly from \Cref{thm:technical} by setting $p=1$ and $d=2$ (if $X$ is a vertex cover, then each connected component $C$ of $G-X$ is a single vertex, so $\cop(C)=1$ and $V(C)$ has diameter $0<2$ in $G$). 

\section{Conclusion}

A natural question is whether a similar approach can be applied to the graph parameters that have been mentioned in the introduction, such as genus or treewidth. An interesting intermediate step between the vertex cover number and the treewidth of a graph is its treedepth, which is defined as follows. Given a rooted tree $T$, its \emph{closure} is the graph obtained from $T$ by adding an edge between each vertex and all its descendants in the tree. The \emph{vertex-height} of a rooted tree is the maximum number of vertices on a root-to-leaf path. The \emph{treedepth} of a connected graph $G$, denoted by $\mathrm{td}(G)$, is the minimum vertex-height of a rooted tree whose closure contains $G$ as a subgraph. 
Note that $\tw(G)+1\le \mathrm{td}(G)\le \vc(G)+1$ for every connected graph $G$.

\begin{problem}
    Is it true that for connected graphs $G$, $\cop(G)=o(\mathrm{td}(G))$?
\end{problem}

At the moment we do not quite see how to improve significantly on $\cop(G)\le \tfrac12\, (\mathrm{td}(G)+1)$, which can be easily deduced from results on the cop number of graphs of bounded treewidth. 

\subsection*{Acknowledgments}

This research was initiated at the Twelfth Annual Workshop on Geometry and Graphs held at the Bellairs Research Institute in February 2025. Thanks to the organizers and the other workshop participants for creating a productive working atmosphere.

\bibliographystyle{abbrvnat}
\bibliography{bib}

\appendix
\section{Proof of Claim \ref{cla:1}}\label{app}

We start by recalling the following classical  concentration inequalities, see e.g.\ Section A.1 in \cite{AS16}.

\begin{lemma}[Chernoff bounds]\label{lem:chernoff}
    Let $X_1, \dots, X_n$ be mutually independent random variables taking values in $\{0,1\}$,
    and let $\mu = \E\left[\sum_{i=1}^n X_i\right]$.
    \begin{enumerate}
        \item $\Pr\left[\sum_{i=1}^n X_i \geq 2\mu\right] \leq \exp\left(-\mu/3\right)$.
        \item $\Pr\left[\sum_{i=1}^n X_i \leq \mu/2\right] \leq \exp\left(-\mu/8\right)$.
    \end{enumerate}
\end{lemma}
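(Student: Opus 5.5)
I would prove both bounds by the standard exponential-moment (Chernoff) method: apply Markov's inequality to $e^{\lambda S}$ or $e^{-\lambda S}$, where $S=\sum_{i=1}^n X_i$. Write $p_i=\Pr[X_i=1]$, so that $\mu=\sum_i p_i$. The key estimate comes from independence and the inequality $1+y\le e^{y}$. For every real $\lambda$ it gives
\[
\E\bigl[e^{\lambda S}\bigr]=\prod_{i=1}^n \bigl(1+p_i(e^{\lambda}-1)\bigr)\le \exp\bigl(\mu(e^{\lambda}-1)\bigr).
\]

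For the upper tail, fix $\lambda>0$. Markov's inequality then gives
\[
\Pr[S\ge 2\mu]\le e^{-2\lambda\mu}\,\E\bigl[e^{\lambda S}\bigr]\le \exp\bigl(\mu(e^{\lambda}-1-2\lambda)\bigr).
\]
I would take the optimizing choice $\lambda=\ln 2$, which makes the exponent $\mu(1-2\ln 2)$. It then remains to check the numerical inequality $1-2\ln 2\le -1/3$, that is, $\ln 2\ge 2/3$. This holds since $\ln 2\approx 0.693$, and it yields the bound $\exp(-\mu/3)$.

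For the lower tail, again fix $\lambda>0$ and apply Markov to $e^{-\lambda S}$:
\[
\Pr[S\le \mu/2]=\Pr\bigl[e^{-\lambda S}\ge e^{-\lambda\mu/2}\bigr]\le e^{\lambda\mu/2}\,\E\bigl[e^{-\lambda S}\bigr]\le \exp\bigl(\mu(e^{-\lambda}-1+\lambda/2)\bigr).
\]
The optimizing choice is again $\lambda=\ln 2$, which gives the exponent $\mu(-\tfrac12+\tfrac12\ln 2)$. Here one needs $\tfrac12(1-\ln 2)\ge 1/8$, i.e.\ $\ln 2\le 3/4$, which is true and yields the bound $\exp(-\mu/8)$.

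The degenerate case $\mu=0$ is trivial, since both right-hand sides equal $1$. There is no real obstacle in this argument. The only points needing care are the independence step that factorizes the moment generating function, and verifying the two elementary numerical inequalities about $\ln 2$.
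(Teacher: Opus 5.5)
Your proof is correct: the bound $\E[e^{\lambda S}]\le \exp(\mu(e^{\lambda}-1))$ combined with Markov's inequality at $\lambda=\ln 2$ gives the exponents $\mu(1-2\ln 2)\le -\mu/3$ and $-\mu(1-\ln 2)/2\le -\mu/8$, and you check both numerical inequalities. The paper does not prove this lemma but cites the standard bounds from Alon--Spencer (Section A.1), which rest on this same exponential-moment argument, so your approach matches the intended one.
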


We are now ready to prove Claim \ref{cla:1}, which we restate here for convenience. We take the notation and assumptions from the proof of Lemma \ref{lem:main}, in particular \[x=|X|\ge f(k,d),\quad t=\lceil\log (2dk)\rceil, \quad \text{ and } \alpha=32 t \log(2t x).\]

\begin{claim} There exist $t$ sets $C_1, \ldots,C_{t}$ such that for every $i\in [t]$,
    \begin{itemize}
        \item $|C_i|\le \tfrac{|X|}{kt}$, and
        \item for every $Y\subseteq X$ such that $|B_G(Y,2^{i}+d)\cap X|\ge \alpha k|Y|$, we have $|B_G(Y,2^{i}+d)\cap C_i|\ge |Y|$.
    \end{itemize}
\end{claim}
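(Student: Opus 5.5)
We construct each $C_i$ by random sampling, in the spirit of \cite[Claim 2.3]{Scott-Sudakov}. Fix $i\in[t]$ and put each vertex of $X$ into a random set $R_i$ independently with probability $q:=\tfrac{1}{2kt}$. Then $\E|R_i|=\tfrac{x}{2kt}$. Since $x\ge f(k,d)$ is huge compared to $kt$, part (1) of Lemma~\ref{lem:chernoff} gives $\Pr[|R_i|>\tfrac{x}{kt}]\le \exp(-\tfrac{x}{6kt})$, which is tiny.

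For the second property we avoid a union bound over all sets $Y$, since there are far too many. Instead we reduce to sets indexed by their size and their neighbourhood. Call $Y\subseteq X$ \emph{bad} for $i$ if $|B_G(Y,2^i+d)\cap X|\ge \alpha k|Y|$ but $|B_G(Y,2^i+d)\cap R_i|<|Y|$. Write $y=|Y|$ and $S=B_G(Y,2^i+d)\cap X$, so $|S|\ge \alpha k y$. The mean of $|S\cap R_i|$ is $q|S|\ge \tfrac{\alpha y}{2t}=16y\log(2tx)$. Being bad means this count falls below $y$, which is far under half its mean. By part (2) of Lemma~\ref{lem:chernoff}, for a fixed $Y$ this happens with probability at most $\exp(-2y\log(2tx))\le (2tx)^{-2y}$.

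We then take a union bound over all $Y$ of size $y$. There are at most $\binom{x}{y}\le x^y$ such sets, so the probability that some $Y$ of size $y$ is bad is at most $x^y(2tx)^{-2y}\le (4t^2x)^{-y}$. Summing over $y\ge1$ gives at most $2/(4t^2x)$. Adding the size-failure probability and summing over the $t$ indices $i$, the total failure probability is below $1$. Hence some outcome satisfies both properties simultaneously, and we set $C_i:=R_i$.

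The main obstacle is the union bound over $Y$. This is exactly why $\alpha$ contains the factor $32t\log(2tx)$: the Chernoff exponent must be linear in $y$ with a coefficient beating $\log x$, so that it outweighs the $x^y$ choices of $Y$. The remaining work is to check the constants: with $q=1/(2kt)$ the exponent $\mu/8\ge 2y\ln$-type bound holds (converting $\log$ versus $\ln$ only helps, since $\log_2\ge\ln$), and $x/(6kt)$ exceeds $\ln(2t)$ because $x\ge f(k,d)$.
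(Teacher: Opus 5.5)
Your proof is correct and takes essentially the same route as the paper: independent sampling with probability $\tfrac{1}{2kt}$, part (1) of the Chernoff bound for the sizes, part (2) for each fixed $Y$, and a union bound over all $Y$ grouped by size, where the $\log(2tx)$ factor in $\alpha$ beats the $\binom{x}{y}\le x^y$ count. One small inaccuracy: your opening remark that you ``avoid a union bound over all sets $Y$'' does not describe what you actually do. Your argument, like the paper's, is exactly a union bound over all $Y$, and it works because the Chernoff exponent is linear in $|Y|$, as your final paragraph correctly explains.
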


\begin{proof} Let $C_1, \ldots,C_{t}$ be random subsets of $X$ defined as follows: For each $i\in [t]$ and $v\in X$, we place $v$ in $C_i$ independently with probability $\tfrac1{2kt}$. 

We start by showing that the first item fails with probability at most $\tfrac14$. Note that for each $i\in [t]$, $\E\left[|C_i|\right]=\tfrac{x}{2kt}$, so it follows from Lemma \ref{lem:chernoff} that $\Pr\left[|C_i| \geq \tfrac{x}{kt}\right] \leq \exp\left(-x/6kt\right)$. Hence, the probability that at least one of the sets $C_i$ is larger than $x/kt$ is at most \[t \cdot \exp\left(-x/6kt\right)\le \tfrac14,\] where the inequality follows from the assumption that $x\ge f(k,d)\ge 6kt \log(4t)\ge 6kt \ln(4t)$.

We now show that the second item fails with probability at most $\tfrac12$, which implies that with probability at least $1-\tfrac12-\tfrac14\ge \tfrac14$ both items hold and thus there exist sets $C_1, \ldots,C_{t}$ satisfying the desired properties. Consider a set $Y\subseteq X$. Assume that $Z=B_G(Y,2^i+d)\cap X$ has size at least $\alpha k|Y|$. Then for any $i\in [t]$,  $\E\left[|Z\cap C_i|\right]=\tfrac{|Z|}{2kt}\ge \tfrac{\alpha}{2t}\,|Y|$. Since $\alpha>4t$, it follows from  Lemma \ref{lem:chernoff} that \[\Pr\left[|Z\cap C_i| <|Y|\right]\le \Pr\left[|Z\cap C_i| \leq \frac{\alpha}{4t}\,|Y|\right] \leq \exp\left(-\frac{\alpha}{16t}\,|Y|\right).\] 
Hence, the probability that there exist $i \in [t]$ and a subset $Y\subseteq X$ such that $|B_G(Y,2^{i}+d)\cap X|\ge \alpha k|Y|$ and $|B_G(Y,2^{i}+d)\cap C_i|< |Y|$ is at most 
\begin{eqnarray*}
    t\cdot \sum_{s=1}^x \binom{x}{s}\exp\left(-\frac{\alpha s}{16t}\right) & \le & t\cdot \sum_{s=1}^x \binom{x}{s}\exp\left(-\frac{\alpha s}{16t}\right)\\
    & \le & t\cdot \sum_{s=1}^x \exp \left(s \ln x-\frac{\alpha s}{16t} \right)\\
    & \le & t\cdot \sum_{s=1}^x \exp \left(-\frac{\alpha s}{32t} \right) \hspace{0.3cm} \text{(since $\alpha \ge 32 t \log x\ge 32 t \ln x$)}
    \\
    & \le & tx \cdot \exp \left(-\frac{\alpha }{32t} \right) \\& \le & 1/2 \hspace{2.4cm} \text{(since $\alpha/32t = \log(2t x)\ge \ln(2tx)$)}. 
\end{eqnarray*}
This concludes the proof of the claim.
\end{proof}
\end{document}